\documentclass[graybox]{svmult}

% choose options for [] as required from the list
% in the Reference Guide

\usepackage{mathptmx} % selects Times Roman as basic font
\usepackage{helvet} % selects Helvetica as sans-serif font
\usepackage{courier} % selects Courier as typewriter font
\usepackage{type1cm} % activate if the above 3 fonts are
 % not available on your system
%
%\usepackage{makeidx} % allows index generation
%\usepackage{graphicx} % standard LaTeX graphics tool
 % when including figure files
\usepackage{multicol} % used for the two-column index
\usepackage[bottom]{footmisc}% places footnotes at page bottom

% see the list of further useful packages
% in the Reference Guide

%\makeindex % used for the subject index
 % please use the style svind.ist with
 % your makeindex program

\usepackage{amssymb}
%\pagestyle{plain}%{empty}
%\sloppy 
\usepackage{mathrsfs}%, amssymb, array}%euscript}
\usepackage[all,cmtip,matrix, arrow]{xy}

\newcommand{\operatorname}[1]{\mathrm{#1}}

\renewcommand\labelenumii{\textup{(\alph{enumii})}}

\newtheorem{ccorollary}[theorem]{Corollary}
\newtheorem{llemma}[theorem]{Lemma}
\newtheorem{pproposition}[theorem]{Proposition}

\begin{document}
\title*{On standard models of conic fibrations over a field of characteristic zero}

\author{Artem Avilov}

\institute{
National Research University Higher School of Economics,
\email{v07ulias@gmail.com}
}

\maketitle
\abstract{
In 1982 V.G. Sarkisov proved the existense of standard models of conic fibrations over algebraically closed fields of $\operatorname{char}\neq 2$. In this paper we will prove the analogous result for three-dimensional conic fibrations over arbitrary fields of characteristic zero with a finite group action.
}

\begin{keywords} 
Conic fibration, Sarkisov link, minimal model program, birational model.
\end{keywords}
\markright{Standard models of conic fibrations}
\footnotetext[0]{The author was partially supported by RFFI grants 11-01-00336 and 12-01-31012, and by grants MK-6223.2012.1 and MK-1192.2012.1}
\section{Introduction}\label{s1}
This paper is motivated by birational classification of threefolds over non-algebrai\-cally closed fields and threefolds with a group action. The minimal model program gives us the first step towards such a classification -- using some elementary birational transformations, every three-dimensional variety can be reduced to a variety whose canonical class is nef or to a variety with a structure of a Fano fibration with some additional properties (cf. ~\cite[\S 2.2]{5} or ~\cite[\S 3]{6}). In this paper we consider varieties with a structure of a fibration by rational curves. Algebraic surfaces with a structure of a fibration by rational curves are studied very well (cf. ~\cite{3}).

In 1982 V.G. Sarkisov in his paper ~\cite[Theorem 1.13]{1} proved that every conic fibration over an algebraically closed field of $\operatorname{char}\neq 2$ has a standard model, that is a Mori fibration by rational curves with smooth total space which is birationally equivavalent to the original one. The anologous theorem for three-dimensional conic fibrations over algebraically closed fields of $\operatorname{char}\neq 2$ was proved by A.A. Zagorskii ~\cite[Theorem 1]{2} in 1977, but his proof contains some gaps. The aim of this paper is the analogous result for three-dimensional varieties with a structure of a fibration by rational curves over arbitrary fields of characteristic zero with a finite group action. The main result of this paper is the following theorem: 
\begin{theorem}\label{th1} Let $k$ be an arbitrary field of characteristic 0. Let $X$ be a geometrically irreducible three-dimensional algebraic variety over $k$, let $Y$ be an algebraic surface over $k$, and let $f: X \dasharrow Y$ be a dominant rational map such that the generic fiber is a rational curve over $k(Y)$. Suppose that a finite group $G$ acts on $X$ and $Y$ by birational automorphisms such that the map $f$ is equivariant. Then the triple  $(X, Y, f)$ has a \textnormal{standard model}, i. e. there exists a commutative diagram 
$$
\xymatrix{
X \ar@{-->}[r]\ar@{-->}[d]^{f} & X' \ar[d]^{f'}\\
Y \ar@{-->}[r]& Y'
}
$$
where $X'$ and $Y'$ are smooth projective varieties with an action of $G$, maps $X\dasharrow X'$ and $Y\dasharrow Y'$ are birational, $f'$ is a Mori fibration and all maps are $G$-equivariant.
\end{theorem}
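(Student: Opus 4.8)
\emph{Overview.} The plan is, first, to pass to a model on which $G$ acts biregularly and $f$ is a genuine morphism; second, to cut the relative Picard number down to one by an equivariant relative minimal model program; and third --- the main work --- to remove the singularities of the total space and of the base while retaining both the Mori-fibration structure and the $G$-action.

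\emph{Regularization and resolution.} Since $\mathrm{char}\,k=0$, equivariant resolution of singularities is at our disposal. A finite group acting by birational automorphisms can be made biregular on a suitable projective model, so after replacing $Y$ by such a model and resolving we may assume that $G$ acts biregularly on a smooth projective surface $\hat Y$. Viewing $k(X)$ as a $G$-equivariant extension of $k(\hat Y)=k(Y)$, I would regularize the $G$-action on $X$, resolve the indeterminacy of $f$ equivariantly over $\hat Y$, and resolve the total space, obtaining a smooth projective threefold $\hat X$ with a biregular $G$-action together with a $G$-equivariant morphism $\hat f\colon \hat X\to \hat Y$ whose generic fiber is a rational curve.

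\emph{Equivariant relative MMP.} Next I would run the $G$-equivariant minimal model program for $\hat X$ relatively over $\hat Y$. As the generic fiber is a rational curve, $K_{\hat X}$ is negative on fibers and hence not relatively pseudo-effective, so the program cannot stop at a relative minimal model and must terminate at a $G$-Mori fiber space. Since the relative dimension is one, the base of this fiber space is birational to $\hat Y$, and therefore to $Y$; denoting it $Y'$, we obtain a $G$-equivariant conic bundle $f'\colon X'\to Y'$ of relative Picard number one (in the $G$-invariant sense) with $X'$ terminal and $\QQ$-factorial and $Y'$ normal but possibly singular.

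\emph{Achieving smoothness.} The hard part is to upgrade $(X',Y',f')$ to a model with smooth projective total space and base without leaving the class of Mori fibrations and without breaking equivariance. Over $\bar k$ this is precisely Sarkisov's theorem: one straightens the degenerate fibers and resolves the singularities of $X'$ and $Y'$ by a sequence of elementary transformations of the conic bundle --- blow-ups of sections or multisections followed by contractions --- combined with blow-ups of the base. The difficulty here is that every such operation must be performed equivariantly for the finite group $\bar G=G\times\mathrm{Gal}(k'/k)$, where $k'/k$ is a finite Galois extension splitting all the relevant data: the singular points of $X'$, the components of the discriminant locus, and the non-reduced fibers are in general not individually defined over $k$ nor $G$-invariant, so each modification has to be applied to a whole $\bar G$-orbit at once. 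Concretely, I would classify the terminal singularities occurring along the fibers of $f'$, show that each $\bar G$-orbit of bad fibers is straightened by a single $\bar G$-equivariant elementary transformation (after first blowing up the base equivariantly to make $Y'$ smooth), and exhibit a $\bar G$-invariant quantity --- for example a weighted count of singular fibers --- that strictly decreases under each step, which forces termination. Galois descent then brings the resulting smooth $\bar G$-conic bundle back to a model over $k$ carrying the original $G$-action, and the commutativity of the square and the $G$-equivariance of all four maps are immediate from the construction. I expect this equivariant straightening of whole orbits of bad fibers, rather than one fiber at a time, to be the principal obstacle.
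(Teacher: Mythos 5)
Your first two steps are exactly the paper's opening moves (its Lemma~\ref{le1}): regularize the $G$-action by passing to the quotient, a projective closure and its normalization, resolve everything equivariantly (possible in characteristic $0$), and run the $G$-equivariant relative MMP over the base, ruling out the relative-minimal-model outcome because $K$ is negative on the generic fiber. Up to that point the proposal is sound and matches the paper. The problem is your third step, which is where the entire content of the theorem lives: you reduce it to ``do Sarkisov's straightening equivariantly'' and then only list the things one would have to prove (classify the singularities, straighten each orbit of bad fibers by one elementary transformation, find a decreasing invariant). None of that is carried out, and this is precisely the part where the classical arguments (e.g.\ Zagorskii's) had gaps; the paper spends all of Section~\ref{s3} on it, by a route that differs substantially from your sketch.

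Two concrete points where your sketch breaks. First, ``after first blowing up the base equivariantly to make $Y'$ smooth'' is not an available operation: the fiber product of a Mori fibration with a blow-up of its base is no longer a Mori fibration (the relative invariant Picard rank jumps, flatness and extremality are lost), so smoothing the base while staying in the Mori category is itself a theorem. The paper does it via a Sarkisov link for each singular $G$-point of the Du Val base (Lemma~\ref{le3}): take a $G$-invariant linear system through the point, form a partial crepant resolution of the pair (Theorem~\ref{th4}, in Alexeev's category $\mathbb{Q}LSc$), run the relative two-ray game, exclude the divisorial outcome by the $\operatorname{Proj}(\bigoplus\mathcal{O}(-nK))$ rigidity argument, and identify the new base as a crepant partial resolution using Morrison's theorem and Theorem~\ref{th5}; iterating reaches the minimal resolution (Corollary~\ref{co1}). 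Second, your termination device and orbit-wise elementary transformations are replaced in the paper by a canonicity argument that dissolves what you call the ``principal obstacle'': after arranging the discriminant to be reduced with simple normal crossings, the fibration is embedded into $\mathbb{P}(\mathcal{E})$ with $\mathcal{E}$ the reflexive hull $(j_{*}f_{*}\mathcal{O}(-K))^{\vee\vee}$ (Lemmas~\ref{le5}, \ref{le6}), a local quadratic-form computation shows every singular point of the total space is an ordinary double point (Lemma~\ref{le7}), and each such point is removed by one link --- blow-up, flop, contraction to a smooth conic bundle over the blow-up of the base at the corresponding node of the discriminant (Lemma~\ref{le8}). Because that construction is canonical, it is automatically $G$-equivariant and needs no Galois descent, and termination is automatic (finitely many crepant divisors over the base, finitely many nodes), with no need for a decreasing weighted count. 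Without some substitute for these steps, your proposal remains a plan rather than a proof.
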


We hope that this paper will be useful for the birational classification of threefolds over non-algebraically closed fields and for classification of finite subgroups in the Cremona group $\operatorname{Cr}_{3}(k)$ (cf.~\cite{9}, ~\cite{10}). We expect that the analogous result holds in arbitrary dimension, but methods of this paper don't work in higher dimensions.

Author is grateful to his scientific adviser Yu. Prokhorov for posing the problem and for numerous useful discussions and advices, to V. Shokurov and C.Shramov for useful discussions and to K. Khabrov for his help with english version of this paper.
\section{Preliminaries}\label{s2}\setcounter{theorem}{0}
Throughtout the paper all objects are defined over an arbitrary field $k$ of characteristic zero. By a \emph{geometrically irreducible} variety over $k$ we mean a variety $X$ such that $X\otimes\bar{k}$ is irreducible over $\bar{k}$. We say that a curve $C$ over an arbitrary field $K$ is \emph{rational} if $C\otimes_{K} \bar{K}$ is birationally equivalent to $\mathbb{P}^{1}_{\bar{K}}$.
\subsection{$G$-varieties and regular conic fibrations}\label{s2.1}
In this paper we use the language of $G$-varieties (cf. ~\cite{3}). Here we will give only the most necessary definitions and statements.

Let $G$ be a finite group.
\refstepcounter{theorem}
\begin{definition}\label{de1} A \emph{$G$-variety (or a variety with an action of the group $G$)} is a pair $(X, \rho)$ where $X$ is an algebraic variety and $\rho:G \to \operatorname{Aut}_{k}(X)$ is a group homomorphism.
\end{definition}
\refstepcounter{theorem}
\begin{definition}\label{de2} A triple $(X, Y, f)$ is a \emph{fibration by rational curves} over the base $Y$ if $X$ is a geometrically irreducible threefold, $Y$ is a geometrically irreducible surface and $f:X \dasharrow Y$ is a rational map such that its general fiber is a rational curve over $k(Y)$. We say that a fibration $(X, Y, f)$ is a \emph{conic fibration} if the surface $Y$ is smooth, the map $f$ is a projective morphism and any scheme fiber of $f$ over an arbitrary closed point of $Y$ is isomorphic to a conic in $\mathbb{P}^2$ (not neccessary smooth).
\end{definition}
\refstepcounter{theorem}
\begin{definition}\label{de3} The group $G$ \emph{acts on a fibration} $(X, Y, f)$ if $X$ and $Y$ are equipped with structures of $G$-varieties and $f$ is a $G$-equivariant map. We call such fibrations $G$-\emph{fibrations}.
\end{definition}
\refstepcounter{theorem}
\begin{definition}\label{de4} A conic fibration $(X, Y, f)$ is \emph{regular} if the map $f$ is a flat morphism between smooth varieties.
\end{definition}
\begin{pproposition}\label{pr1}\textnormal{(cf. ~{\cite[Prop. 1.2]{4}})} Let $f:X\to Y$ be a regular conic fibration. Then the following statements hold:\begin{enumerate}
\item
The sheaf $\mathcal{E}=f_{*}\mathcal{O}(-K_{X})$ is locally free of rank 3;
\item
$\mathcal{O}(-K_{X})$ is a relatively very ample sheaf, and it defines an embedding of the variety $X$ into $\mathbb{P}(\mathcal{E})=\operatorname{Proj}(S^{\bullet}\mathcal{E})$ such that every fiber of $f$ maps to a conic in the corresponding projective plane;
\item
There exists a discriminant curve (this curve may be reducible) $\Delta_{f}$ with the following properties:
\begin{enumerate}
\renewcommand\labelenumii{(\roman{enumii})}
\item
$\Delta_{f}$ has only simple normal crossings of its irreducible components as singularities;
\item
Let $y$ be a closed point of a surface $Y$ and let $X_{y}$ be the corresponding fiber of $f$. Then $y \notin \Delta_{f}$ iff $X_{y}$ is a smooth conic, $y \in \Delta_{f} \setminus \operatorname{Sing} \Delta_{f}$ iff $X_{y}$ is a geometrically reducible reduced conic and $y\in \operatorname{Sing} \Delta_{f}$ iff $X_{y}$ is a double line.
\end{enumerate}
\end{enumerate}
\end{pproposition}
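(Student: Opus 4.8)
The plan is to obtain (1)--(2) from relative adjunction together with cohomology and base change, and to obtain (3) from a local analysis of the family of quadratic forms that cuts out $X$.

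First, for (1) and (2), I would compute the restriction of $-K_X$ to a fibre. Since $f$ is flat with Gorenstein fibres (each fibre is a plane conic, hence a hypersurface in $\mathbb{P}^2$), the relative dualizing sheaf $\omega_{X/Y}=\omega_X\otimes f^*\omega_Y^{-1}$ restricts on a fibre $X_y$ to its dualizing sheaf $\omega_{X_y}$, which for a conic (arithmetic genus $0$) has degree $-2$. As $f^*\omega_Y$ is trivial along fibres, we get $\mathcal{O}(-K_X)|_{X_y}\cong\omega_{X_y}^{-1}$, an invertible sheaf of degree $2$ on a curve of arithmetic genus $0$. Riemann--Roch then gives $\chi=3$ and Serre duality gives $H^1=0$ for every $y$, so $h^0(X_y,\mathcal{O}(-K_X)|_{X_y})=3$ is constant. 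By Grauert's theorem (or cohomology and base change, using the vanishing of $H^1$) the sheaf $\mathcal{E}=f_*\mathcal{O}(-K_X)$ is locally free of rank $3$ and its formation commutes with base change, which is (1). Because a degree-$2$ invertible sheaf on a conic is very ample and embeds the conic as a plane conic, the relative evaluation map is surjective and $\mathcal{O}(-K_X)$ is $f$-very ample; it therefore defines a closed embedding $X\hookrightarrow\mathbb{P}(\mathcal{E})$ over $Y$ carrying each fibre to a conic in the corresponding plane $\mathbb{P}(\mathcal{E}\otimes k(y))$, which is (2).

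For (3), I would realize $X$ inside $\mathbb{P}(\mathcal{E})$ as the zero locus of a section of $\mathcal{O}_{\mathbb{P}(\mathcal{E})}(2)\otimes f^*\mathcal{M}$, i.e. of a symmetric form $q$ which Zariski- or \'etale-locally on $Y$ is a symmetric $3\times3$ matrix $(q_{ij}(u))$ with regular entries. Setting $\Delta_f=\{\det q=0\}$ defines the discriminant as a divisor (the vanishing of a section of a line bundle of the form $(\det\mathcal{E})^{\otimes2}\otimes\mathcal{M}^{\otimes3}$), and the classification of plane conics by the rank of $q_y$ gives the trichotomy: rank $3$ is a smooth conic, rank $2$ is a pair of distinct lines, rank $1$ is a double line. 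It remains to match this stratification with $\Delta_f$ and to establish the normal-crossing property, and this I would do by the Jacobian criterion applied to the local equation of $X$. At a point where $q_y$ has rank $2$, completing the square (legitimate in characteristic $0$) puts $X$ in the affine local form $x_0^2+x_1^2+c(u)=0$ near the node of the fibre-conic, where $c$ is a local equation of $\Delta_f$; the criterion shows $X$ is smooth there iff $dc\neq0$, i.e. iff $\Delta_f$ is smooth at $y$. Hence the rank-$2$ locus is exactly $\Delta_f\setminus\Sing\Delta_f$.

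The main work, and the step I expect to be the principal obstacle, is the rank-$1$ case. Completing the square in the nondegenerate variable reduces $X$ locally to $x_0^2+Q(u;x_1,x_2)=0$, where $Q$ is a binary quadratic form in $x_1,x_2$ whose coefficients $c_{ij}(u)$ vanish at $y_0$, and $\det q$ agrees up to a unit with $\det(c_{ij})$; thus $\Delta_f$ is singular at $y_0$ with tangent cone $\ell_{11}\ell_{22}-\ell_{12}^2$, where $\ell_{ij}=d\,c_{ij}(y_0)\in T^*_{y_0}Y$. Writing $\ell_{11},\ell_{12},\ell_{22}$ as the columns of a $2\times3$ matrix $M$, the Jacobian criterion applied along the whole reduced fibre-line shows that $X$ is smooth at each of its points precisely when the linear subspace $\ker M\subset\mathbb{P}^2$ is disjoint from the conic $\{b^2=4ac\}$, the locus of the vectors $(\alpha^2,2\alpha\beta,\beta^2)$ indexing those points. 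This disjointness is in turn equivalent to the nondegeneracy of the binary form $\ell_{11}\ell_{22}-\ell_{12}^2$, that is, to $\Delta_f$ having an ordinary node at $y_0$. Hence smoothness of $X$ forces $\Delta_f$ to be nodal exactly along the rank-$1$ locus, so that locus equals $\Sing\Delta_f$ and $\Delta_f$ has only simple normal crossings. Assembling the two local computations yields the full statement of (3). The delicate points throughout are the passage to a good local normal form (choosing coordinates so that the square may be completed and the kernel directions isolated) and the verification that the single geometric condition $\ker M\cap\{b^2=4ac\}=\varnothing$ simultaneously governs the smoothness of $X$ and the nodal nature of $\Delta_f$.
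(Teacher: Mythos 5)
Your argument is essentially correct, but note that the paper itself does not prove Proposition~\ref{pr1} at all: the statement is imported from Beauville \cite[Prop.~1.2]{4}, and Remark~\ref{re1} only observes that Beauville works over an algebraically closed field and that the general case ``reduces easily'' to that one. What you have written is in substance a reconstruction of Beauville's original argument: relative duality for the Gorenstein fibres plus cohomology and base change for (i)--(ii), and local diagonalization of the defining quadratic form plus the Jacobian criterion for (iii). Your route does buy something the paper's citation does not: since you diagonalize Zariski- or \'etale-locally over arbitrary residue fields (legitimate in characteristic zero), the proof works directly over a non-closed base field, so the unproved reduction of Remark~\ref{re1} is never needed. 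Two steps should be made explicit to turn the sketch into a complete proof. First, the pivotal equivalence in the rank-one case---that $\ker M$ misses the conic $\{b^{2}=4ac\}$ if and only if the binary form $\ell_{11}\ell_{22}-\ell_{12}^{2}$ is nondegenerate---is exactly the classical identity expressing the discriminant of $\det(v_{1}A_{1}+v_{2}A_{2})$ as the resultant of the two binary quadratics attached to the symmetric matrices $A_{1}$, $A_{2}$; it is true and verifiable by direct expansion, but you only assert it. Second, you should record why $\det q\not\equiv 0$ (the generic fibre is a rational, hence geometrically integral, hence smooth conic) and why rank zero never occurs (fibres are one-dimensional by flatness), so that $\Delta_{f}$ is indeed a curve and your rank trichotomy is exhaustive.
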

\refstepcounter{theorem}
\begin{remark}\label{re1} In the paper ~\cite{4} this statement was proved for the case of an algebraically closed field, but the case of a non-algebraically closed field can be reduced easily to the previous case.
\end{remark}
\refstepcounter{theorem}
\begin{remark}\label{re2} The image of the embedding $X\subset \mathbb{P}(\mathcal{E})$ from Proposition ~\ref{pr1} is an embedded conic fibration in the sense of Definition ~\ref{de7} and in this case the discriminant curve coincides with the discriminant divisor (see Definition ~\ref{de8}).
\end{remark}
\refstepcounter{theorem}
\begin{definition}\label{de5} A regular conic fibration $(X, Y, f)$ with an action of the group $G$ is a \emph{standard conic fibration} if varieties $X$ and $Y$ are projective and the morphism $f$ is relatively minimal, i.e. for every $G$-invariant $G$-irreducible divisor $D \subset Y$  ($G$-irreducible means that it cannot be represented as a sum of two non-zero $G$-invariant effective divisors) its preimage $f^{-1}(D)$ is a $G$-irreducible divisor on $X$. This property is equivalent to $\rho(X/Y)^G=1$, where $\rho(X/Y)^G$ is the rank of the $G$-invariant part of the relative Picard group.
\end{definition}
\refstepcounter{theorem}
\begin{definition}\label{de6} $G$-fibrations by rational curves $(X, Y, f)$ and $(X', Y', f')$ are \emph{equivalent} if there exist birational  $G$-equivariant maps $\lambda :X\dasharrow X'$ and $\mu :Y \dasharrow Y'$ such that the following diagram is commutative:
$$
\xymatrix{
X \ar@{-->}[r]^{\lambda}\ar@{-->}[d]^{f} & X' \ar@{-->}[d]^{f'}\\
Y \ar@{-->}[r]^{\mu}& Y'
}
$$
\end{definition}
\begin{theorem}\label{th2}\textnormal{(cf. ~{\cite{1}})} Every fibration by rational curves $(X, Y, f)$ over an algebraically closed field of $\operatorname{char}\neq2$ has a standard model $(X', Y', f')$, i.e. a standard conic fibration which is equivalent to the original one.
\end{theorem}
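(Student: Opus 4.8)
The plan is to replace the given rational map by a genuine flat conic bundle over a smooth projective base, and then to improve it by a sequence of birational surgeries until it becomes smooth and relatively minimal; throughout I would work modulo the equivalence of Definition~\ref{de6}, which allows $X$ and $Y$ to be modified compatibly. Note that the generic fibre is a smooth conic over $k(Y)$, i.e. a form of $\PP^1_{k(Y)}$ which need not be split, so the output is genuinely a conic fibration and not merely a $\PP^1$-bundle.

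First I would make $f$ a morphism and the base smooth. Resolving the indeterminacy of $f$ and the singularities of the ambient varieties (resolution being available for threefolds over the fields under consideration), and replacing $Y$ by a smooth projective model, I may assume that $f\colon X\to Y$ is a projective morphism with $Y$ smooth and general fibre a smooth conic. The fibres over finitely many points may still be two-dimensional; to make $f$ equidimensional with one-dimensional fibres I would run a relative minimal model program over $Y$ (since $-K_X$ is relatively ample on the general fibre, $K_X$ is not relatively nef), or equivalently flatten $f$ by a theorem of Raynaud--Gruson and discard the non-flat locus. The result is a flat conic bundle $f\colon X\to Y$ over a smooth projective surface whose total space may still be singular.

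Next I would make the bundle regular in the sense of Definition~\ref{de4} and put its discriminant in standard position. Along a ``bad'' fibre --- one over which $X$ is singular, or which is non-reduced or reducible over the wrong type of point of the discriminant --- I would apply \emph{elementary transformations}: blow up one component (or one geometric ruling) of the degenerate fibre and contract the other. Each such transformation alters the local analytic model of $f$ and can be chosen to improve the singularities of $X$ while controlling the fibre type. Once $X$ is smooth and $f$ is flat between smooth varieties, Proposition~\ref{pr1} applies: $\mathcal{E}=f_{*}\mathcal{O}(-K_{X})$ is locally free of rank $3$, $X$ embeds into $\PP(\mathcal{E})$ with conics as fibres, and a discriminant curve $\Delta_f$ appears, which a further sequence of transformations must render a simple normal crossing divisor with the fibre types prescribed by part (iii) of Proposition~\ref{pr1} (this is where $\operatorname{char}k\neq 2$ enters, through the classification of degenerate conics).

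Finally I would achieve relative minimality as in Definition~\ref{de5}. If $\rho(X/Y)>1$, some fibre of $f$ is reducible and the relative cone of curves has an extremal ray spanned by one of its components; contracting that ray (after an auxiliary elementary transformation if the component is not directly contractible) strictly lowers $\rho(X/Y)$, at the cost of possibly blowing down a curve on $Y$. Iterating reaches $\rho(X/Y)=1$, giving a standard conic fibration equivalent to $(X,Y,f)$. I expect the principal obstacle to be the middle step: arranging the elementary transformations so that they \emph{simultaneously} resolve the singularities of $X$ along the bad fibres, turn $\Delta_f$ into a simple normal crossing divisor, and keep the list of fibre types under control, together with a proof that both this process and the subsequent reduction of $\rho(X/Y)$ terminate.
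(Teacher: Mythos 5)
First, a point of reference: the paper never actually proves Theorem~\ref{th2} --- it is quoted from Sarkisov \cite{1} as a known input. The proof the paper does contain (of Theorem~\ref{th6}, the characteristic-zero $G$-equivariant analogue) is organized around the minimal model program: first reach a $G$-Mori fibration (Lemma~\ref{le1}), then repair the base and put the discriminant in SNC position (Lemmas~\ref{le2}--\ref{le4}), then embed into $\mathbb{P}(\mathcal{E})$, show that the remaining singularities of the total space are ordinary double points, and remove them by explicit Sarkisov links that blow up a point of the base and flop (Lemmas~\ref{le6}--\ref{le8}). Your outline is instead the classical Sarkisov--Zagorskii route (flatten, then elementary transformations, then contract down to relative minimality), which is a legitimately different decomposition of the problem --- but as written it is a plan, not a proof.

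The genuine gap is the one you flag yourself: the ``middle step'' you defer is not a technical verification but the entire content of the theorem, and there are concrete reasons it cannot be waved through. (i) Elementary transformations are surgeries along divisors (one-parameter families of degenerate fibres); they cannot remove an isolated singular point of $X$ lying on the fibre over a singular point of $\Delta_f$. As the paper's Lemmas~\ref{le7}--\ref{le8} show, such a point (an ordinary double point once $\Delta_f$ is reduced SNC) can only be eliminated by a link whose base map is the blow-up of the point of $Y$ beneath it --- the base must change, which your step does not provide for. (ii) The sub-goals interfere: making $\Delta_f$ SNC forces blow-ups of $Y$, and pulling the conic bundle back along such a blow-up creates new non-flat locus and new singular points of the total space, which call for further surgeries; the assertion that some ordering of these operations terminates with smoothness, flatness and an SNC discriminant achieved simultaneously is precisely Sarkisov's theorem, and no termination argument is offered. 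Separately, your first step (and your final contraction step) invokes the relative MMP over $Y$, a characteristic-zero tool at the level of generality used here, while the statement covers every algebraically closed field of characteristic $\neq 2$; so even if the middle step were filled in, the argument as written would prove at best the characteristic-zero case --- which is exactly the restriction the present paper accepts in exchange for using the MMP, and which Sarkisov's original argument is designed to avoid.
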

\subsection{Embedded conic fibrations}\label{s2.2}
\refstepcounter{theorem}
\begin{definition}\label{de7} Let $\mathcal{E}$ be a locally free sheaf of rank 3 on a smooth surface $Y$ and let $\tau : \mathbb{P}(\mathcal{E}) \to Y$ be a standard projection. Then we say that an irreducible reduced divisor $X \subset \mathbb{P}(\mathcal{E})$ is an \emph{embedded conic fibration} if the generic fiber of the induced map $\tau|_{X}:X\to Y$ is a conic over $k(Y)$ (some fibers over closed points can be two-dimensional).
\end{definition}
\refstepcounter{theorem}
\begin{remark} An embedded conic fibration isn't necessary a conic fibration in the sense of Definition ~\ref{de2}.
\end{remark}

Let $\mathcal{E}$ be a locally free sheaf of rank 3 on $Y$, let $\mathcal{L}=\mathcal{O}_{\mathbb{P}(\mathcal{E})}(1)$ and let $\mathcal{M}$ be an invertible sheaf on $Y$. Suppose that zeroes of some section $\sigma \in H^{0}(\mathbb{P}(\mathcal{E}), \mathcal{L}^{2}\otimes \tau^{*}\mathcal{M})$ form an embedded conic fibration. Obviously, every embedded conic fibration can be obtained in the same way for suitable $\mathcal{E}$ and $\mathcal{M}$. Moreover, if $(X, Y, f)$ is a regular conic fibration and $X \subset \mathbb{P}(\mathcal{E})$ is a relatively anticanonical embedding (cf. Proposition ~\ref{pr1}) then we can put $\mathcal{E}=f_{*}\mathcal{O}_{X}(-K_{X})$ and then $\mathcal{M}$ has the canonical isomorphism with $\det(\mathcal{E}^{*})\otimes \mathcal{O}_{Y}(-K_{Y})$ (for details see ~\cite{1}).

There exists the natural isomorphism $$H^{0}(Y, S^{2}(\mathcal{E})\otimes \mathcal{M})\cong H^{0}(\mathbb{P}(\mathcal{E}), \mathcal{L}^{2}\otimes \tau^{*}(\mathcal{M})),$$ so we can denote the corresponding section of the sheaf $S^{2}(\mathcal{E})\otimes \mathcal{M}$ by the same letter $\sigma$. There exists another natural monomorphism $S^{2}(\mathcal{E})\otimes \mathcal{M} \to \operatorname{Hom}(\mathcal{E}^{*}, \mathcal{E} \otimes \mathcal{M})$, so $\sigma$ defines a morphism of sheafs $q(\sigma): \mathcal{E}^{*}\to \mathcal{E}\otimes\mathcal{M}$. The morhism $q(\sigma)$ defines a morhism of vector bundles $q_{0}(\sigma):\Lambda^{3}\mathcal{E}^{*}\to \Lambda^{3}(\mathcal{E}\otimes\mathcal{M})$.
\refstepcounter{theorem}
\begin{definition}\label{de8} The divisor of zeroes of the morhism of vector bundles $$q_{0}(\sigma)\in H^{0}(Y, (\det\mathcal{E})^{2}\otimes\mathcal{M}^{3})\subset H^{0}(Y, \operatorname{Hom}(\det\mathcal{E}^{*}, \det(\mathcal{E}\otimes\mathcal{M})))$$ is a \emph{discriminant divisor} of the conic fibration.
\end{definition}
\refstepcounter{theorem}
\begin{remark}\label{re3} The discriminant divisor isn't reduced in general case, but it is reduced for regular conic fibrations (cf. ~\cite[corollary 1.9]{1}).
\end{remark}
\refstepcounter{theorem}
\begin{definition}\label{re4} Let $f: X\to Y$ be a conic fibration and $U\subset Y$ be an open subset. Assume that $\operatorname{codim}(Y\setminus U, Y)>1$ and the induced map $f_{U}: X_{U}\to U$ defines a regular conic fibration. Let $\Delta\subset U$ be a discriminant divisor of $f_{U}$. Then we call its Zariski closure $\bar{\Delta}\subset Y$ a \emph{discriminant divisor} of the conic fibration $(X, Y, f)$.
\end{definition}
\subsection{Minimal model program}\label{s2.3}
\refstepcounter{theorem}
\begin{definition} Let $X$ be a $G$-variety. We say that $X$ has \emph{$G\mathbb{Q}$-factorial singularities} if every $G$-invariant Weyl divisor is  $\mathbb{Q}$-Cartier.
\end{definition}\label{de9}
\refstepcounter{theorem}
\begin{definition} We say that a morphism between algebraic varieties is a  \emph{contraction} if it has connected fibers.
\end{definition}
\refstepcounter{theorem}
\begin{definition} A projective $G$-equivariant contraction $\pi: X\to Y$ is called an \emph{extremal $G$-equivariant Mori contraction} if $X$ is a normal variety with at worst terminal $G\mathbb{Q}$-factorial singularities, the anticanonical class is relatively ample and all $G$-invariant exceptional curves are numerically proportional.
\end{definition}
\refstepcounter{theorem}
\begin{definition} An extremal $G$-equivariant Mori contraction $\pi: X\to Y$ is called a \emph{$G$-Mori fibration} if $\operatorname{dim} X>\operatorname{dim} Y$, a \emph{divisorial} contraction if $\operatorname{dim} X>\operatorname{dim} Y$ and the exceptional set is a divisor and a \emph{small} contraction if $\operatorname{dim} X>\operatorname{dim} Y$ and the exceptional set is a curve.
\end{definition}

\begin{theorem}\label{th3}\textnormal{ (cf. ~\cite[\S 2.2]{5})}
Let $X$ be a three-dimensional $G$-variety with at worst $G\mathbb{Q}$-factorial terminal singularities, let $Y$ be a $G$-surface and let $f:X\to Y$ be a projective $G$-equivariant morphism. Then after a sequence of equivariant divisorial contractions and flips we obtain a projective $G$-equivariant morphism $f':X'\to Y$ such that $X'$ has at worst $G\mathbb{Q}$-factorial terminal singularities and either the canonical divisor $K_{X'}$ is relatively nef (so we say that $X'$ is a relatively minimal model over $Y$) or the morphism $f'$ factors through a $G$-Mori fibration $g:X'\to Z$.
\end{theorem}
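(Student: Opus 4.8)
The plan is to run the relative minimal model program for $f\colon X\to Y$ in the $G$-equivariant category. The non-equivariant relative MMP for threefolds with terminal $\mathbb{Q}$-factorial singularities is classical, and the equivariant version is obtained by carrying the $G$-action through every step; the decisive structural fact that makes this possible is that the morphisms produced by the MMP (extremal contractions and flips) are canonically determined by the data they contract, so they automatically commute with the $G$-action. The program then proceeds by repeatedly contracting a $K_{X}$-negative extremal face and, when the contraction is small, replacing $X$ by its flip, until one of the two alternatives is reached.

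The engine of each step is the $G$-equivariant relative Cone and Contraction Theorems, which follow from the ordinary ones by restricting to the $G$-invariant part $N_{1}(X/Y)^{G}$ of the relative N\'eron--Severi space (here finiteness of $G$ and $\operatorname{char}k=0$ let us average over the group). If $K_{X}$ is relatively nef we are in the first alternative and stop. Otherwise there is a $K_{X}$-negative extremal face of $\overline{NE}(X/Y)$ which is $G$-invariant, and its contraction gives a projective $G$-equivariant morphism $\operatorname{cont}\colon X\to Z$ over $Y$; since the contracted face is $G$-invariant and the contraction is unique, $Z$ inherits a $G$-action making $\operatorname{cont}$ equivariant. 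One then splits into the usual trichotomy. If $\dim Z<\dim X$, the map $\operatorname{cont}$ is a $G$-Mori fibration and we are in the second alternative. If $\operatorname{cont}$ is divisorial, replace $X$ by $Z$: standard threefold theory keeps the singularities terminal and $\mathbb{Q}$-factorial, $G$-invariance of the face upgrades this to $G\mathbb{Q}$-factorial, and $\rho(X/Y)$ drops by one. If $\operatorname{cont}$ is small, replace $X$ by its flip $X^{+}$: terminal threefold flips exist by Mori's theorem, and since a flip is the relative $\operatorname{Proj}$ of the canonical algebra it is unique and hence $G$-equivariant, with $X^{+}$ again terminal $G\mathbb{Q}$-factorial.

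\textbf{The main obstacle} is termination. Divisorial contractions decrease $\rho(X/Y)$ and so occur only finitely often, and fiber-type contractions end the process, but flips leave $\rho(X/Y)$ unchanged, so one must exclude an infinite chain of flips. Here I would invoke termination of terminal threefold flips (Shokurov, Kawamata), which holds in dimension three; the $G$-equivariance preserved at each step then yields a finite $G$-equivariant program ending in one of the two stated alternatives. Both the existence and the termination of flips are special to low dimension, which is exactly why the method is confined to threefolds.
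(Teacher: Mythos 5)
The paper gives no proof of this statement at all: it is quoted as a known result, with the reference to Koll\'ar--Mori [\S 2.2], and is then used as a black box in Section 3. So there is no internal argument to compare yours against. Your sketch is, in outline, exactly the standard equivariant-MMP argument that underlies that citation: work with the $G$-invariant part of the relative N\'eron--Severi space, contract $G$-invariant $K_X$-negative extremal faces, get equivariance for free from the uniqueness of extremal contractions and of flips (the flip being the relative $\operatorname{Proj}$ of the canonical algebra), and terminate via the drop of the invariant relative Picard rank for divisorial contractions together with Shokurov's termination of terminal threefold flips. That is the right proof and the right reason it is confined to dimension three.

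One point you should state more carefully, since as written it is backwards. The hypothesis of the theorem (and the category in which the $G$-MMP runs) is $G\mathbb{Q}$-factorial, which is a \emph{weaker} condition than $\mathbb{Q}$-factorial: every $G$-invariant Weil divisor is $\mathbb{Q}$-Cartier, not every Weil divisor. A $G$-step contracts a $G$-invariant extremal face --- typically a whole orbit of rays at once --- and such a contraction in general \emph{destroys} $\mathbb{Q}$-factoriality while preserving $G\mathbb{Q}$-factoriality; so the phrase ``standard threefold theory keeps the singularities terminal and $\mathbb{Q}$-factorial, $G$-invariance of the face upgrades this to $G\mathbb{Q}$-factorial'' does not parse: there is no upgrade, and one cannot simply quote the $\mathbb{Q}$-factorial theory. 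The trichotomy (fiber type / divisorial / small), the preservation of terminal singularities, and the rank-drop count must be proved directly in the $G\mathbb{Q}$-factorial category, using $G$-invariant divisors throughout (for instance, the invariant face is cut out by a nef supporting divisor obtained by averaging over $G$, and $K_X$ is negative on the whole face because $K_X$ is $G$-invariant). Similarly, a $G$-flip is the simultaneous flip of the disjoint connected components of an invariant flipping locus: each component is an ordinary Mori extremal neighborhood, and the flips glue because flipping is local over the target; the difficulty argument for termination applies verbatim to these simultaneous flips. These are standard adjustments --- which is presumably why the paper contents itself with the citation --- but they are exactly the content hiding behind the word ``equivariant'' in the statement, so a proof should make them explicit.
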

\refstepcounter{theorem}
\begin{definition}\label{de11} A linear system of Weyl divisors is \emph{movable} if it has no base components. Let $\mathcal{L}_{i}$ be a movable linear systems on $X$ and let $\mathcal{L}=\sum l_{i}\mathcal{L}_{i}$ be their formal linear combination. We say that a birational morphism $Y\to X$ is a \emph{log-resolution of singularities} of the pair $(X, \mathcal{L})$ if $Y$ is smooth and preimages of all linear systems $\mathcal{L}_{i}$ has no base points. Let $D_{i}\in\mathcal{L}_{i}$ be general members of linear systems. We say that the pair $(X, \mathcal{L})$ has \emph{terminal} (resp. \emph{canonical}) singularities if for every log-resolution of singularities $f: Y \to X$ in the formula 
$$K_{Y}+D_{Y}=f^{*}(K_{X}+D)+\sum \lambda_{j}E_{j},$$ one has all $\lambda_{j}>0$ (resp. all $\lambda_{j}\geq 0$), where $D=\sum l_{i}D_{i}$ and $D_{Y}$ is a strict transform of $D$.
\end{definition}

The following useful category was introduced by V.A. Alexeev in ~\cite{13}:
\refstepcounter{theorem}
\begin{definition}\label{de12} The category $\mathbb{Q}LSc$ consists of pairs $(X, \mathcal{L})$ where $X$ is a $\mathbb{Q}$-factorial variety with at worst terminal singularities and $ \mathcal{L}=\sum l_{i} \mathcal{L}_{i}$ is a formal sum of movable linear systems with non-negative coefficients $l_{i}$ such that the pair $(X, \mathcal{L})$ is canonical.
\end{definition}

By ~\cite{13} all fundamental theorems and hypotheses of the minimal model program hold in the category $\mathbb{Q}LSc$  in the three-dimensional case (the cone theorem, the contraction theorem, the existence of flips and the termination of a sequence of flips). So the minimal model program works well in this category. If $X$ is a three-dimensional $G$-variety and every linear system $\mathcal{L}_{i}$ is $G$-invariant then the $G$-equivariant relative minimal model program (cf. Theorem ~\ref{th3}) also works in the category $\mathbb{Q}LSc$.

The next theorem is a $G$-equivariant version of a partial crepant resolution of singularities of a pair (cf. ~\cite[Proposition 2.10]{17}):
\begin{theorem}\label{th4} Suppose that a $G$-variety $X$ has at worst $G\mathbb{Q}$-factorial terminal singularities and $\mathcal{H}$ is a $G$-invariant movable linear system with base points. Then there exists a $G$-variety $Z$ with at worst $G\mathbb{Q}$-factorial terminal singularities, an extremal $G$-equivariant divisorial Mori contraction $p:Z\to X$ and a number $c$ such that the pair $(Z, cp^{-1}_{*}\mathcal{H})$ has at worst canonical singularities and $$K_{Z}+cp^{-1}_{*}\mathcal{H}\sim p^{*}(K_{X}+c\mathcal{H}).$$
\end{theorem}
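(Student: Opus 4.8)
The plan is to take $c$ to be the \emph{canonical threshold} of the pair $(X,\mathcal{H})$, namely
\[
c=\sup\{\,t\ge 0 : (X,t\mathcal{H})\ \text{has at worst canonical singularities}\,\}.
\]
Since $X$ is terminal, $(X,t\mathcal{H})$ is terminal for all sufficiently small $t>0$, so this set is non-empty; since $\mathcal{H}$ has base points, on a log-resolution there is an exceptional divisor $E$ over the base locus with $\mathrm{mult}_E\mathcal{H}>0$, whose discrepancy $a(E;X)-t\,\mathrm{mult}_E\mathcal{H}$ becomes negative for large $t$, so $c<\infty$. Thus $0<c<\infty$, the pair $(X,c\mathcal{H})$ is canonical but not terminal, and there is at least one divisor over $X$ with $a(E;X,c\mathcal{H})=0$ (a \emph{crepant} divisor). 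Because $\mathcal{H}$ is movable its base locus has codimension $\ge 2$, so such a crepant divisor is genuinely exceptional over $X$. As $\mathcal{H}$ is $G$-invariant the number $c$ is $G$-invariant and the finite set of value-$0$ crepant divisors is permuted by $G$.

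The key observation is that once $c$ is chosen this way the required canonicity on $Z$ is automatic. Indeed, any $p\colon Z\to X$ satisfying the crepant relation $K_Z+c\,p^{-1}_*\mathcal{H}\sim p^*(K_X+c\mathcal{H})$ has $a(F;Z,c\,p^{-1}_*\mathcal{H})=a(F;X,c\mathcal{H})$ for every divisor $F$ over $Z$ (equivalently over $X$), since crepant birational maps preserve discrepancies. As $(X,c\mathcal{H})$ is canonical, all these numbers are $\ge 0$, whence $(Z,c\,p^{-1}_*\mathcal{H})$ is canonical as well. Therefore the whole statement reduces to constructing a single $G$-equivariant extremal divisorial \emph{crepant} extraction $p\colon Z\to X$ of one $G$-orbit of crepant divisors, with $Z$ having $G\mathbb{Q}$-factorial terminal singularities.

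To build a crepant extraction I would first produce a $G$-equivariant $\mathbb{Q}$-factorial terminalization of $(X,c\mathcal{H})$. Take a $G$-equivariant log-resolution $h\colon W\to X$ of $(X,\mathcal{H})$ (available in characteristic zero) and run the $G$-equivariant relative minimal model program for $(W,c\,h^{-1}_*\mathcal{H})$ over $X$. This is legitimate because, after rescaling, the pair lies in Alexeev's category $\mathbb{Q}LSc$, in which all the required MMP theorems hold (cf.~\cite{13}), and its $G$-equivariant relative form is Theorem~\ref{th3}. Since $(X,c\mathcal{H})$ is canonical, every $h$-exceptional divisor has non-negative discrepancy, so this program contracts exactly the divisors of positive discrepancy and terminates at a $G\mathbb{Q}$-factorial terminal $G$-variety $\phi\colon Z'\to X$ that is crepant, $K_{Z'}+c\,\phi^{-1}_*\mathcal{H}\sim\phi^*(K_X+c\mathcal{H})$, and extracts precisely the crepant divisors.

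It remains to pass from the terminalization $\phi$, which may extract a whole $G$-invariant collection of crepant divisors, to a single extremal contraction; this is the main obstacle. Fixing one $G$-orbit of extracted divisors, I would run a further $G$-equivariant relative MMP over $X$, polarized so as to contract all the \emph{other} extracted divisors while keeping the chosen orbit, arriving at a $G\mathbb{Q}$-factorial terminal model $p\colon Z\to X$ with $\rho(Z/X)^G=1$, i.e. an extremal $G$-equivariant divisorial Mori contraction. Contracting only crepant divisors preserves the crepant relation, so $K_Z+c\,p^{-1}_*\mathcal{H}\sim p^*(K_X+c\mathcal{H})$, and canonicity of $(Z,c\,p^{-1}_*\mathcal{H})$ then follows from the discrepancy-preservation observation above. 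The delicate point is exactly this final extremal factorization: one must choose the relative polarization so that the intermediate program contracts the unwanted divisors $G$-equivariantly and terminates with a genuine divisorial (rather than small) extremal contraction of the single remaining orbit, all while staying inside $\mathbb{Q}LSc$.
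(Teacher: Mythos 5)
Your opening moves coincide with the paper's: taking $c$ to be the canonical threshold, observing that the crepant relation automatically transports canonicity from $(X,c\mathcal{H})$ to $(Z,c\,p^{-1}_{*}\mathcal{H})$, and building a $G$-equivariant terminalization $\phi\colon Z'\to X$ from an equivariant log resolution by a relative MMP in $\mathbb{Q}LSc$. The genuine gap is the step you yourself label ``the main obstacle'': passing from $Z'$, which may extract several $G$-orbits of crepant divisors, to a \emph{single} extremal divisorial Mori contraction. You propose to fix one orbit in advance and run a further MMP ``polarized so as to contract all the other extracted divisors while keeping the chosen orbit,'' but you never exhibit such a polarization, nor verify that the resulting program stays in $\mathbb{Q}LSc$, contracts exactly the unwanted divisors, and ends with a divisorial rather than small contraction; as written this is a declaration of intent, not a proof, and it is precisely the content the theorem asks for. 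The paper's resolution is simpler and involves no choice of orbit at all: starting from the terminalization, run the ordinary $G$-equivariant $K$-MMP (no boundary) relative to $X$. Every exceptional divisor of $Z'\to X$ is crepant for $(X,c\mathcal{H})$, hence has strictly positive discrepancy with respect to $K_X$ alone (since $X$ is terminal and such a divisor has positive multiplicity along $\mathcal{H}$), so this program contracts all of them; and since $X$ is $G\mathbb{Q}$-factorial and terminal it admits no nontrivial $G$-equivariant small birational modification, so the program terminates at $X$ itself. Its last step is therefore automatically an extremal $G$-equivariant $K$-negative divisorial contraction $p\colon Z\to X$ whose exceptional divisor is a single $G$-orbit of crepant divisors; crepancy of that orbit gives $K_Z+c\,p^{-1}_{*}\mathcal{H}\sim p^{*}(K_X+c\mathcal{H})$, and your own transport-of-canonicity observation finishes the argument. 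Note also that on this route the Mori property ($-K_Z$ relatively ample) is built in, whereas in your sketch it is asserted rather than proved.

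A secondary point: in your terminalization step you run the relative MMP for the pair $(W,c\,h^{-1}_{*}\mathcal{H})$ and claim it ``contracts exactly the divisors of positive discrepancy.'' With coefficient exactly $c$ the crepant divisors span $(K+c\mathcal{H})$-trivial, not negative, classes, so canonicity alone does not yet rule out that some intermediate step contracts one of them; this can be argued (a contracted divisor would acquire strictly positive discrepancy with respect to all later pairs, contradicting the effectivity of the exceptional difference that canonicity of $(X,c\mathcal{H})$ forces), but the argument must be supplied. The paper sidesteps this by running the program for $(Y,(c+\epsilon)\mathcal{H}_{Y})$ with a suitable small $\epsilon>0$, the standard perturbation that makes every crepant divisor of $(X,c\mathcal{H})$ strictly negative for the pair being run and hence guarantees it survives, while divisors of positive $(X,c\mathcal{H})$-discrepancy remain positive and are contracted.
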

\begin{proof}
There exists a number $c$ (the canonical threshold of the pair) such that the pair $(X, c\mathcal{H})$ is canonical but not terminal. Let $(Y, c\mathcal{H}_{Y})$ be an equivariant resolution of singularities for the pair $(X, c\mathcal{H})$. Now we apply the equivariant relative minimal model program to the pair $(Y, (c+\epsilon)\mathcal{H}_{Y})$ with respect to the morphism $Y \to X$ in the category $\mathbb{Q}LSc$ with a suitable $\epsilon$ and obtain a pair $(Y', c\mathcal{H}_{Y}')$ with a morphism $Y'\to X$ such that the exceptional set of this morphism consists exactly of crepant divisors of the pair $(X, c\mathcal{H})$ and $Y'$ has at worst terminal singularities. Applying further equivariant relative minimal model program for the variety $Y'$ we obtain the required pair $(Z, cp^{-1}_{*}\mathcal{H})$ on the last step (for details see ~\cite{13} and ~\cite[Proposition 2.10]{17}). \hfill$\Box$
\end{proof}

Also we need the following theorem of Yu.G. Prokhorov and S. Mori on the structure of three-dimensional Mori fibrations with a two-dimensional base:
\begin{theorem}\label{th5}\textnormal{[cf. ~\cite[Theorem 1.2.7]{8}]} Let $f:X\to Z$ be a three-dimensional Mori fibration with a two-dimensional base over an algebraically closed field of characteristic zero. Then $Z$ has at worst Du Val singularities of type A.
\end{theorem}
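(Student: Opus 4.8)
The statement is local on the base, so the plan is to fix a closed point $o\in Z$ and analyse the germ of $f$ over $o$, writing $C=f^{-1}(o)_{\mathrm{red}}$ for the reduced central fibre. Since $X$ is terminal and three-dimensional, its singular locus is a finite set of points, all contained in the fibres over finitely many points of $Z$; away from the images of those fibres $f$ is a smooth conic bundle over a smooth base, so $Z$ is already smooth there. Hence it suffices to identify the analytic type of $(Z,o)$ at each of the finitely many points over which $X$ is singular or the fibre degenerates.

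The engine is the relative anticanonical system. First I would establish the \emph{general elephant} statement: a general member $D\in|{-}K_X|$ over the germ is a normal surface with at worst Du Val singularities. By adjunction $K_D=(K_X+D)|_D\sim 0$, which is consistent with $D$ being Du Val, but the genuine content — that $D$ is normal with canonical rather than worse singularities — is deep and rests on the Mori--Reid classification of three-dimensional terminal singularities together with an analysis of how each such singularity can lie on a fibre of a $\QQ$-conic bundle. Granting this, note that $-K_X\cdot F=2$ for a general fibre $F\cong\PP^1$, so on a general fibre $D$ meets $F$ in two points and $g=f|_D\colon D\to Z$ is finite of degree $2$. In characteristic $0$ a degree-$2$ cover is Galois, so there is an involution $\iota$ on $D$ whose quotient recovers $(Z,o)$ analytically near $o$.

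The final step is to run through the involutions of a Du Val germ and their quotients, retaining only those compatible with the remaining data. Using the Hurwitz formula $K_D=g^{*}K_Z+R$ for the ramification $R$ one checks that the quotient is again Gorenstein and canonical, hence Du Val, and a direct inspection of the possible actions of $\iota$ on the $A$-$D$-$E$ germs — cut down by the conic-bundle constraints on $C$ — shows that only the type-$A$ series survives. The main obstacle is the first step: producing and controlling the Du Val elephant, which is exactly where the full three-dimensional terminal classification and a long case analysis of the admissible central fibres $C$ enter. The double-cover bookkeeping of the last step is comparatively routine, and this imbalance is precisely why the theorem is quoted from Mori--Prokhorov rather than reproved here.
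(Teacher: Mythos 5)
The paper itself contains no proof of this statement: it is quoted from Mori--Prokhorov~\cite[Theorem 1.2.7]{8}, a fact you correctly recognize at the end of your sketch, so your proposal can only be measured against that source and on its own internal logic. Its first steps are sound: the statement is local over $Z$; a general member $D\in|{-}K_X|$ satisfies $D\cdot F=-K_X\cdot F=2$ on a general fibre $F$, so $g=f|_D\colon D\to Z$ is finite of degree $2$; and in characteristic zero $g$ is Galois, so $(Z,o)$ is recovered from the germ of $D$ over $o$ together with the Galois involution $\iota$. Granting the general elephant theorem (which is indeed the principal theorem of ~\cite{8}), all of this is correct.

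The genuine gap is in the last step, precisely the one you call ``comparatively routine''. An involution quotient of a Du Val germ need not be canonical, and the Hurwitz formula cannot show otherwise. Concretely, the involution of $A_1=\CC^2/\frac{1}{2}(1,1)$ induced by $\diag(i,i)$ acts freely outside the origin, so $R=0$ and $K_D=g^{*}K_Z$, yet the quotient is $\CC^2/\frac{1}{4}(1,1)$, which is log terminal but neither canonical nor Gorenstein (its exceptional curve has discrepancy $-1/2$); in general ``$D$ Du Val'' forces only discrepancies $\geq -1/2$ on $Z$. Even granting canonicity, type $A$ does not follow: $D_4=\CC^2/Q_8$ (with $Q_8$ the quaternion group) is the quotient of $A_3$ by an involution acting freely outside the origin, so the $D$--$E$ series genuinely occurs among involution quotients of Du Val germs; and if $\iota$ happens to swap the two points of $D$ over $o$, one gets $(Z,o)\cong (D,p)$, which a priori is any $A$--$D$--$E$ type. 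So the ``conic-bundle constraints'' you invoke in one clause are carrying the entire weight of the theorem, and they cannot be supplied by bookkeeping on the abstract pair $(D,\iota)$: one must use how $\iota$, the branch curve and the degenerate fibres sit inside $X$, which is where the real work is. This also inverts the architecture of ~\cite{8}: there the type-$A$ statement is obtained, alongside the general elephant theorem, as a corollary of a classification of $\QQ$-conic bundle germs (reduction to irreducible central fibre, then induction on the index via index-one covers and Mori's deformation technique), not by deducing it from the elephant through a quotient analysis. A minor further slip: away from $\operatorname{Sing}(X)$ the morphism $f$ is not smooth (the discriminant curve persists); smoothness of $Z$ outside finitely many points instead follows from Mori's theorem that a Mori fibration with smooth three-dimensional total space over a surface is a conic bundle over a smooth surface, i.e. ~\cite[Theorem 3.5]{14}, which is exactly the result this paper invokes in the Gorenstein case.
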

\section{The proof of the main theorem}\label{s3}\setcounter{theorem}{0}

\begin{theorem}\label{th6} Let $k$ be an arbitrary field of characteristic 0. Let $X$ be a three-dimensional algebraic geometrically irreducible variety over $k$, let $Y$ be a surface over $k$, let $G$ be a finite group acting by birational automorphisms on $X$ and $Y$ and let $f: X \dasharrow Y$ be a $G$-equivariant dominant map which defines a fibration by rational curves. Then the $G$-fibration $(X, Y, f)$ has a standard model, i.e. there exists a standard $G$-conic fibration which is equivalent to the original one.
\end{theorem}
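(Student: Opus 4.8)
The plan is to reduce, by equivariant birational modifications, to a $G$-Mori conic fibration over a surface, and then to upgrade this to a genuine standard model by a local analysis over the (mild) singularities of the base. First I would turn the rational map $f$ into a morphism. Since $\operatorname{char} k = 0$ and $G$ is finite, equivariant resolution of indeterminacy and of singularities is available: after replacing $Y$ by a smooth projective $G$-surface birational to it and $X$ by a smooth projective $G$-threefold dominating $X$ and mapping to the new base via the resolved $f$, we may assume that $X$ and $Y$ are smooth and projective and that $f : X \to Y$ is a projective $G$-equivariant morphism whose general fibre is a rational curve. This does not change the equivalence class of the $G$-fibration, so it suffices to produce a standard model for the modified triple.

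Next I would run the $G$-equivariant relative minimal model program of Theorem \ref{th3} for $f : X \to Y$; the smooth variety $X$ has $G\mathbb{Q}$-factorial terminal singularities, so the program applies in the category $\mathbb{Q}LSc$. The generic fibre is geometrically a $\mathbb{P}^1$, on which $K_X$ has degree $-2$, and it remains a rational curve throughout the program, so $K_X$ can never become relatively nef; the only possible outcome is that $f$ factors, after a sequence of equivariant divisorial contractions and flips, through a $G$-Mori fibration $g : X' \to Z$. Since the resulting $f' : X' \to Y$ still dominates the surface $Y$ and factors through $g$ with $\dim Z < \dim X' = 3$, while its general fibre is one-dimensional, we must have $\dim Z = 2$; and because the general fibre of $f'$ is geometrically irreducible, the induced map $Z \to Y$ has degree one, i.e. $Z$ is birational to $Y$. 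Thus $g : X' \to Z$ is a $G$-Mori conic fibration equivalent to the original one, with $X'$ carrying $G\mathbb{Q}$-factorial terminal singularities.

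It remains to convert $g$ into a regular, relatively minimal conic fibration in the sense of Definition \ref{de5}. Passing to $\bar k$ and applying Theorem \ref{th5}, the base $Z$ has at worst Du Val singularities of type $A$ (over $k$, the corresponding $k$-forms). I would take the minimal equivariant resolution $\tilde Z \to Z$ and pass to a terminal model of $X'$ over $\tilde Z$: pulling the relatively anticanonical movable system back to the resolved locus introduces base points, and a combination of Theorem \ref{th4} with the relative $G$-MMP over $\tilde Z$ produces the divisorial extractions needed to obtain a $G\mathbb{Q}$-factorial terminal threefold $\tilde X$ with a conic fibration $\tilde g : \tilde X \to \tilde Z$ over the now smooth base. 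Over the smooth surface $\tilde Z$ the type-$A$ description of the original base singularities makes the local models of $\tilde g$ explicit, and one checks that $\tilde X$ is smooth and $\tilde g$ is flat, so that $\tilde g$ is a regular conic fibration to which Proposition \ref{pr1} applies. A final run of the relative $G$-MMP over $\tilde Z$ then contracts the superfluous components of reducible fibres created by the resolution — the only extremal contractions available on a smooth conic bundle, which preserve smoothness and flatness — until the $G$-invariant relative Picard number equals one, giving relative minimality. The main obstacle is exactly this last stage: controlling the singularities of the total space over the singular points of the base and proving that, thanks to the type-$A$ restriction of Theorem \ref{th5}, the conic bundle can be made simultaneously smooth, flat, relatively minimal and $G$-equivariant.
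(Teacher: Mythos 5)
Your first stage --- equivariant resolution, the relative $G$-MMP, and the conclusion that the outcome is a $G$-Mori conic fibration over a surface $Z$ with Du Val singularities of type A --- is correct and coincides with Lemma~\ref{le1} of the paper. The gap is everything after that. The first missing ingredient is control of the discriminant divisor: the local structure of a conic fibration near its degenerate fibres is governed by the singularities of the discriminant \emph{curve}, not by the singularities of the base, so the phrase ``the type-A description of the original base singularities makes the local models of $\tilde g$ explicit'' conflates two different things. The paper inserts a dedicated step (Lemmas~\ref{le2} and~\ref{le4}): equivariantly resolve the pair consisting of the base and the discriminant so that the discriminant becomes reduced with simple normal crossings, then run the relative $G$-MMP again and check that this property survives. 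Without it no local analysis of the total space is possible.

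The second, more serious, point is that your key claim --- that over the smooth base ``one checks that $\tilde X$ is smooth and $\tilde g$ is flat'' --- is false as stated. Even over a smooth base and with a reduced SNC discriminant, a $G$-Mori conic fibration can have ordinary double points of its total space over the nodes of the discriminant; proving that these are the \emph{only} singularities is the content of Lemmas~\ref{le6} and~\ref{le7} (which in turn require the embedding into $\mathbb{P}(\mathcal{E})$ supplied by Lemma~\ref{le5}). Moreover, these double points cannot be removed by any MMP over a fixed base, since the fibration is already relatively minimal there: the paper removes them by the Sarkisov link of Lemma~\ref{le8} (blow up the point, flop, contract), whose effect downstairs is to blow up the base at the node of the discriminant --- so the final base is a \emph{further} blow-up of the minimal resolution of $Z$, not that minimal resolution itself, contrary to your plan. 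Likewise, the passage from $Z$ to its minimal resolution while staying inside the category of Mori fibrations is itself a Sarkisov link (Lemma~\ref{le3}), and its proof needs a genuine argument excluding the divisorial-contraction outcome: the contradiction that the extracted divisor would be simultaneously crepant and non-crepant, which uses the fact that two Mori fibrations over the same base that are isomorphic in codimension one coincide, both being $\operatorname{Proj}\bigl(\bigoplus\mathcal{O}(-nK)\bigr)$. Your ``combination of Theorem~\ref{th4} with the relative $G$-MMP'' does not supply this. You flag this stage yourself as ``the main obstacle'': that obstacle \emph{is} the theorem, and the proposal stops exactly where the proof has to begin.
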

\begin{proof} Let us break the proof into several lemmas. 
\begin{llemma}\label{le1} Suppose that a fibration $(X, Y, f)$ satisfies the conditions of Theorem ~\ref{th6}. Then there exists a $G$-Mori fibration by rational curves $(X',Y', f')$ which is equivalent to $(X, Y, f)$, where $X'$ and $Y'$ are projective. Moreover, $Y'$ has at worst Du Val singularities.
\end{llemma}
\begin{proof} Replacing $X$ and $Y$ by their open subsets we may assume that $X$ and $Y$ are normal and that $G$ acts on $X$ and $Y$ by regular automorphisms. Quotient varieties $X/G$ and $Y/G$ are quasiprojective, let $\bar{X}$ and $\bar{Y}$ be their projective closures. Then a normalisation of $\bar{X}$ (resp. $\bar{Y}$) in the function field $k(X)$ (resp, $k(Y)$) is a projective variety with a regular action of $G$ which is birationally equivalent to $X$ (resp. $Y$). So we can assume from the begining that $X$ and $Y$ are projective and $G$ acts on them by regular automorphisms. 

Applying the equivariant resolution of singularities of the varieties $Y$ , $X$ and of the map $f$ (it is possible due to ~\cite{12}) we can assume that $f$ is a morphism between nonsingular projective varieties and the generic fiber of $f$ is a rational curve. Applying the equivariant relative (over $Y$) minimal model program we obtain a $G\mathbb{Q}$-factorial three-dimensional variety $X'$ over $Y$ with at worst terminal singularities and one of two alternative possibilities happens: either the canonical divisor $K_{X'}$ is $f'$-ample with respect to the morphism $f': X'\to Y$ or the morphism $f'$ factors through a $G$-Mori fibration $f'':X'\to Y'$. But initially the divisor $-K_{f^{-1}(U)}$ is $f$-ample for an open subset $U$ such that the fibration over $U$ is a regular conic fibration, so the first possibility cannot happen. Thus we have that $(X', Y', f')$ is a $G$-Mori fibration by rational curves because of dimensional reasons. Also we have that $X'$ has at worst terminal singularities and $Y'$ is a surface with at worst Du Val singularities by Theorem ~\ref{th5}. \hfill$\Box$
\end{proof}
\begin{llemma}\label{le2} Let $(X', Y', f')$ be a $G$-Mori fibration over the two-dimensional base. Then it is equivalent to a $G$-fibration by rational curves $(\widetilde{X}, \widetilde{Y}, \widetilde{f})$, where $\widetilde{X}$ is a projective threefold, $\widetilde{Y}$ is a smooth surface, $\widetilde{f}$ is a morphism  and there exists a divisor $\widetilde{\Delta}$ on $\widetilde{Y}$ with simple normal crossings such that  $\widetilde{f}$ is smooth over $\widetilde{Y}\setminus\widetilde{\Delta}$.
\end{llemma}
\begin{proof}
Throwing out a finite number of points from $Y'$, more presicely all singular points of $Y'$ and images of all singular points of $X'$ (singularities of $X'$ are terminal and therefore isolated) we obtain an open subset $U\subset Y'$ such that our fibration is a regular conic fibration over $U$. So we can define the discriminant divisor on $U$. Let us denote by $\Delta'$ its closure in $Y'$, this divisor is reduced according to Remark ~\ref{re3}. Let us consider an equivariant resolution of singularities of the pair $(Y', \Delta')$, let us denote it by $\widetilde{\alpha}:\widetilde{Y}\to Y'$ . Now the divisor $\widetilde{\Delta}=\Delta'+\operatorname{Exc}(\widetilde\alpha)$ is a reduced divisor with simple normal crossings. Let $\widetilde{X}$ be an equivariant resolution of singularities of the dominant component of $X'\times_{Y'} \widetilde{Y}$ (at first we normalize this component if it's necessary) and  $\widetilde{f}$ be the corresponding morphism from $\widetilde{X}$ to $\widetilde{Y}$. Obviously, all degenerate fibers are laying over points of $\widetilde{\Delta}$. \hfill$\Box$
\end{proof}
\refstepcounter{theorem}
\begin{definition} By a \emph{$G$-point} of a $G$-variety we mean a $G$-orbit of some closed point.
\end{definition}
\begin{llemma}\label{le3} Let $(W, V, g)$ be a $G$-Mori fibration with the two-dimensional base. Let $v$ be a singular $G$-point of the surface $V$. Then there exists another $G$-Mori fibration $(W', V', g')$ which is eqivalent to $(W, V, g)$, such that the map $V'\to V$ is a partial crepant resolution of singularity at the $G$-point $v$ and the map $W\dasharrow W'$ is a Sarkisov link (for the definition see ~\cite[\S 13]{6}). 
\end{llemma}
\begin{proof} Let us consider a linear system of hyperplane sections $\mathcal{H}$ for some embedding of $V$ into a projective space such that $\operatorname{Bs}(\mathcal{H})$ coinsides with $v$ and $\mathcal{H}$ is invariant under the action of the group $G$. Such linear system can be obtained in the following way: let us consider a very ample divisor $D$ on $V$ which is invariant under the action of the group $G$ and let $\mathcal{H}$ be a finite-dimensional $G$-invariant linear subsystem of divisors containing the $G$-point $v$ in the full linear system $|nD|$ for $n$ big enough. Let $\mathcal{N}=g^{-1}_{*}(\mathcal{H})$ be the preimage of $\mathcal{H}$ and let $c$ be the canonical threshold of the pair $(W, \mathcal{N})$. According to Theorem ~\ref{th4} the pair $(W, c\mathcal{N})$ has a partial crepant resolution of singularities $(\widetilde{W}, c\widetilde{\mathcal{N}})$. Denote the morphism $\widetilde{W}\to W$ by $\pi$.

Consider the equivariant relative (over $V$) minimal model program for the pair $(\widetilde{W}, c\widetilde{\mathcal{N}})$. All curves on the exceptional divisor $E$ of the morphism $\pi$ are numerically proportional and have a zero intersection number with $K_{\widetilde{W}}+ c\widetilde{\mathcal{N}}$ by the projection formula. Let $\widetilde{C}$ be a sufficienly general fiber of the map $h:\widetilde{W}\to V$ and let $C$ be a fiber of the map $W\to V$ over the same point, then by the projection formula we have $$\widetilde{C}\cdot (K_{\widetilde{W}}+c\widetilde{\mathcal{N}})=\widetilde{C}\cdot\pi^{*}(K_{W}+c\mathcal{N})=C\cdot(K_{W}+c\mathcal{N})=C\cdot K_{W}<0,$$  where the last equality follows from the choice of the linear system. So we have exactly one negative extremal ray on the relative $G$-invariant Mori cone $\operatorname{NS}(\widetilde{W}/V)^{G}$ and its contraction gives us either a $G$-Mori fibration by rational curves or a small contraction (in fact, the first case is impossible but now it's not important for us). 

In the second case after a sequence of $G$-equivariant log-flips (this sequence cannot be infinite, see ~\cite[\S 6.3]{5} or ~\cite[\S 9.2]{6}) we get a $G$-variety $\widetilde{W}'$ with a $G$-equivariant morphism $h':\widetilde{W}'\to V$. The relative $G$-invariant Mori cone $\operatorname{NS}(\widetilde{W}'/V)^{G}$ is generated by two extremal rays and exactly one of them is negative. The contraction of the negative ray can be either a divisorial contraction or a $G$-Mori fibration. Suppose that we have the first case, so we obtain a $G$-variety $\widehat{W}$ with a morphism $\widehat{g}:\widehat{W}\to V$ and this morphism gives us a $G$-Mori fibration over the surface $V$. But then $W$ and $\widehat{W}$ are $G$-Mori fibrations over the common base $V$ which are isomorphic in codimension 1, thus actually they are isomorphic: $$W\simeq \operatorname{Proj}(\bigoplus \mathcal{O}_{W}(-nK_{W}))\simeq \operatorname{Proj}(\bigoplus \mathcal{O}_{\widehat{W}}(-nK_{\widehat{W}})) \simeq \widehat{W}.$$ Notice that the divisor $E$ is the only divisor in fibers of $h':\widetilde{W}'\to V$, so only $E$ can be the exceptional divisor of the contraction $\widetilde{W}'\to\widehat{W}$. But then on one hand $E$ is crepant for the pair $(W, c\mathcal{N})$ by the construction and on the other hand $E$ is an exceptional divisor on the contraction of the negative ray, so it cannot be crepant. Contradiction. 

So the case of a divisorial contraction is impossible, and we obtain a $G$-Mori fibration $\widetilde{g}:\widetilde{W}\to \widehat{V}$ with the morphism $\sigma: \widehat{V}\to V$. The morphism $\widetilde{g}$ must be a fibration by rational curves by the dimensional reason. Thus we have the following commutative $G$-equivariant diagram which defines a Sarkisov link:
$$
\xymatrix{
(W, c\mathcal{N})\ar[d]^{g} & (\widetilde{W}, c\widetilde{\mathcal{N}}) \ar[l] \ar@{-->}[r] & (\widehat{W}, c\widehat{\mathcal{N}})\ar[d]^{\widehat{g}}\\
V & & \widehat{V}\ar[ll]
}
$$
Moreover, $\widehat{V}$ has at worst Du Val singularities by Theorem ~\ref{th5} and the exceptional set of the morphism $\widehat{V} \to V$ consists of the unique $G$-invariant $G$-irreducible divisor (because the exceptional divisor of $\pi: \widetilde{W} \to W$ cannot lie in a fiber of the morphism $\widehat{g}$). By~\cite[Theorem 1.4]{7} this morphism can be either a composition of weighted blow-ups of smooth points of $V$ or a crepant partial resolution of singularities of $V$. Obviously, we have the second case because the exceptional divisor lie over the singular $G$-point $v$ by the construction, so the surface $\widehat{V}$ is a crepant partial resolution of singularities of the surface $V$. \hfill$\Box$
\end{proof}
\begin{ccorollary}\label{co1} Let $(W, V, g)$ be a $G$-Mori fibration with the two-dimensional base. Then there exists an equivalent $G$-Mori fibration $(W', V', g')$ with the two-dimensio\-nal base such that the surface $V'$ is the minimal resolution of singularities of the surface $V$.
\end{ccorollary}
\begin{proof}
Applying Lemma ~\ref{le3} several times we get a $G$-Mori fibration over a smooth two-dimensional base, because the number of crepant divisors on $V$ is finite (all crepant divisors are exactly exceptional divisors on the mininal resolution of singularities of $V$). \hfill$\Box$
\end{proof}
\begin{llemma}\label{le4} Suppose that fibration $f: X\dasharrow Y$ satisfies the conditions of Theorem ~\ref{th6}. Then there exists a $G$-Mori fibration $\widehat{f}:\widehat{X}\to\widehat{Y}$ such that this fibration is equivalent to the original one, $\widehat{Y}$ is a smooth projective surface and the discriminant divisor is a reduced divisor with only simple normal crossings of its components as singularities.
\end{llemma}
\begin{proof} Applying Lemma ~\ref{le1} we get a projective $G$-Mori fibration $f':X'\to Y'$ which is equivalent to the original one. Then we apply Lemma ~\ref{le2} to the $G$-fibration $f':X'\to Y'$ and get a $G$-fibration by rational curves $\widetilde{f}: \widetilde{X}\to\widetilde{Y}$ such that $\widetilde{Y}$ is a projective smooth surface and there exists a divisor $\widetilde{\Delta}$ with simple normal crossings such that the morphism $\widetilde{f}$ is smooth over $\widetilde{Y}\setminus\widetilde{\Delta}$. Now we apply the relative equivariant minimal model program to $\widetilde{X}\to \widetilde{Y}$. For the same reasons as in Lemma ~\ref{le1} we get a $G$-Mori fibration by rational curves $\bar{f}: \bar{X}\to \bar{Y}$ with a morphism $\bar{\alpha}:\bar{Y}\to \widetilde{Y}$. $\bar{X}$ has at worst terminal singularities and $\bar{Y}$ has at worst Du Val singularities. 

Now we apply Corollary ~\ref{co1} to the $G$-fibration $\bar{f}: \bar{X}\to \bar{Y}$. We obtain a $G$-Mori fibration $\widehat{f}:\widehat{X}\to\widehat{Y}$ with a morphism $\widehat{\alpha}:\widehat{Y}\to\bar{Y}$. So we have the following $G$-equivariant commutative diagram:
$$
\xymatrix{
X\ar[d]^{f} \ar@{-->}[r] & X' \ar[d]^{f'} & \widetilde{X}\ar[d]^{\widetilde{f}}\ar[l]\ar@{-->}[r] & \bar{X} \ar[d]^{\bar{f}}\ar@{-->}[r] & \widehat{X} \ar[d]^{\widehat{f}}\\
Y & Y'\ar@{-->}[l]& \widetilde{Y} \ar[l]^{\widetilde{\alpha}}& \bar{Y} \ar[l]^{\bar{\alpha}}& \widehat{Y} \ar[l]^{\widehat{\alpha}}
}
$$

Now we consider the divisor $\widehat{\Delta}=\widetilde{\Delta}+\operatorname{Exc}(\bar{\alpha}\circ\widehat{\alpha})$. Then $\widehat{\Delta}$ is a divisor with simple normal crossings because the divisor $\widetilde{\Delta}$ has the same property and $\bar{\alpha}\circ\widehat{\alpha}$ is a morphism between smooth surfaces. The discriminant divisor of the fibration $(\widehat{X}, \widehat{Y}, \widehat{f})$ (let us denote it by $\widehat{\Delta}'$) is a reduced divisor and all of its components are contained in $\widehat{\Delta}$. So, $\widehat{\Delta}'$ is a reduced divisor with only simple normal crossings of its components as singularities. \hfill$\Box$
\end{proof}
\refstepcounter{theorem}
\begin{definition}\label{de13} A $G$-\emph{sheaf} is a quasicoherent sheaf $E$ on a $G$-variety $Z$ with the set of an isomorphisms of sheaves $\lambda_{g}^{E}:E\to g^{*}E$ satisfying the following relations: $\lambda_{1}^{E}=Id_{E}$ and $\lambda_{g_{1}g_{2}}^{E}=g^{*}_{2}(\lambda_{g_{1}}^{E})\circ \lambda_{g_{2}}^{E}$.
\end{definition}
\begin{llemma}\label{le5} Let $g:W\to U$ be a regular $G$-conic fibration where $U$ is an open subset of a nonsingular surface $V$ such that the complement $V\setminus U$ consists of a finite number of points. Then there exists an embedded $G$-conic fibration $g':W'\to V$ isomorphic to $g:W\to U$ over $U$.
\end{llemma}
\begin{proof}
Denote the natural embedding $U\to V$ by $j$. By Proposition ~\ref{pr1} the anticanonical divisor $-K_{W_{U}}$ induces an embedding of $W$ into $\mathbb{P}(\mathcal{E}_{1})$, where $\mathcal{E}_{1}=g_{*}\mathcal{O}_{W}(-K_{W})$.

Now we notice that the dualizing sheaf $\omega_{W}$ has a natural structure of a $G$-sheaf on $W$ and it induces a structure of a $G$-sheaf on the anticanonical sheaf $-K_{W}$. Consequently, the locally free sheaf $\mathcal{E}$ has a natural structure of a $G$-sheaf. Let us consider the sheaf $\mathcal{E}=(j_{*}\mathcal{E}_{1})^{\vee\vee}$. It is a locally free sheaf of rank 3 on $V$ (because it is a reflexive sheaf and every reflexive sheaf on a smooth surface is actually locally free, cf.~\cite[Corollary 1.4]{11}), and it also has a structure of a $G$-sheaf. Moreover, it is the only locally free sheaf which extends $\mathcal{E}_{1}$ on the whole $V$. Let us denote the closure of $W$ in $\mathbb{P}(\mathcal{E})$ by $W'$. It is an embedded conic fibration and the action of the group $G$ on $\mathbb{P}(\mathcal{E})$ induces the action of $G$ on $W'$. \hfill$\Box$
\end{proof}
\begin{llemma}\label{le6} Let $h: W\to V$ be a three-dimensional $G$-Mori fibration over the smooth two-dimensional base such that the discriminant divisor $\Delta$ is a reduced divisor with simple normal crossings. Then there exists a locally free $G$-sheaf $\mathcal{E}$ of rank 3 on $V$ and an equivariant embedding $W \to \mathbb{P}(\mathcal{E})$ over $V$ which induces a structure of an embedded $G$-conic fibration on $W$.
\end{llemma}
\begin{proof}
If $W$ is smooth then there's nothing to prove by Proposition ~\ref{pr1}. If $W$ is not smooth then all of its singularities are terminal. Terminal three-dimensional singularities are isolated, so we can apply Lemma ~\ref{le5} (where $U$ is a complection to the set of images of all singular points of $W$). Then we apply Lemma ~\ref{le5}, let us denote the corresponding embedded $G$-conic fibration by $W'\subset \mathbb{P}(\mathcal{E})\to V$. Since $V\setminus U$ is a finite set of points then the discriminant divisor of $G$-conic fibration $(W', V, h')$ coincides with $\Delta$. 

Now we verify that the morphism $h'$ is flat. Over a neighborhood $B$ of an arbitrary point $v \in V$ we can define our fibration $W'$ with a quadratic form $$Q_{B}(x_{1}, x_{2}, x_{3})=\sum\limits_{i, j=1}^{3}A_{i, j}x_{i}x_{j}$$ on $\mathbb{P}^{2}\times B$ where $A_{i,j}$ are functions from $k[B]$. If the rank of such form in $v$ is zero then all $A_{i,j}$ vanishes at $v$ and $\operatorname{det}Q_{B}$ lies in $\mathfrak{m}_{v}^{3}$, where $\mathfrak{m}_{v}$ is the maximal ideal of the point $v$. But $\operatorname{det}Q_{B}$ defines the discriminant divisor which has at worst a simple normal crossing at the point $v$. So, the rank of the form $Q_{B}$ cannot be zero at any point. Variety $W'$ is a normal locally complete intersection and the morphism $h'$ is equidimensional, so actually this morphism is flat. As a consequence, varieties $W$ and $W'$ are isomorphic over $V$ in codimension 1. Anticanonical divisors $-K_{W}$ and $-K_{W'}$ are ample so $$W\simeq \operatorname{Proj}(\bigoplus \mathcal{O}_{W}(-nK_{W}))\simeq \operatorname{Proj}(\bigoplus \mathcal{O}_{W'}(-nK_{W'})) \simeq W'.$$ Thus $W$ is an embedded $G$-conic fibration such that its discriminant divisor is reduced and has only simple normal crossings as singularities. \hfill$\Box$
\end{proof}

So, applying Lemma ~\ref{le4} we obtain a $G$-Mori fibration $\widehat{f}:\widehat{X}\to\widehat{Y}$ with the smooth projective base $\widehat{Y}$ such that the discriminant divisor $\widehat{\Delta}$ is a reduced divisor with simple normal crossings. By Lemma ~\ref{le6} this fibration is an embedded (into a projectivization of some locally free sheaf of rank 3) $G$-conic fibration. If $\widehat{X}$ is smooth then the morphism $\widehat{f}$ is flat by ~\cite[Theorem 3.5]{14}). Assume that $\widehat{X}$ has singular points. In two following lemmas we will construct a Sarkisov link which will help us resolve singularities of $\widehat{X}$.
\begin{llemma}\label{le7} Let $h: W\to V$ be an embedded (into a projectivization of some locally free sheaf of rank 3) conic fibration ($V$ is not necessary projective) such that the discriminant divisor $\Gamma$ is reduced and has only simple normal crossings. Then all singular points of $W$ are ordinary double points. Moreover, if $\pi: \widetilde{W}\to W$ is the blow up of the maximal ideals of all singular points, then $\widetilde{W}$ is smooth and the anticanonical divisor $-K_{\widetilde{W}}$ is relatively nef and big.
\end{llemma}
\begin{proof}
Due to Sarkisov (cf. ~\cite[Corollary 1.11]{1}) all singular points of $W$ lay exactly on geometrically reducible fibers of the fibration over singular points of $\Gamma$. Let $w$ be a singular point of $W$ and let $v$ be a corresponding point on $V$, $v=h(w)$. We claim that in an appropriate formal neighborhood of the point $v$ we can define our fibration $W\to V$ by the following quadratic form 
\begin{equation}
\label{1}
Q(x_{1}, x_{2}, x_{3})=ax_{1}^{2}+bx_{2}^{2}+stx_{3}^{2}=0,
\end{equation}
where $s$ and $t$ are functions which locally define components of $\Gamma$ passing through $v$, and $a$ and $b$ are some nonzero elements of the base field $k$. 

Let us consider a neighborhood $U$ of the point $v$ such that the sheaf from the statement of the lemma is trivial over $U$ and the intersection $U\cap \Gamma$ consists of two components of $\Gamma$ passing through $v$. Then we can define $W$ over this neighborhood by a quadratic form $$Q_{U}(x_{1}, x_{2}, x_{3})=\sum\limits_{i, j=1}^{3}A_{i, j}x_{i}x_{j}$$ in $\mathbb{P}^{2}\times U$ where $A_{i,j}$ are functions from $k[U]$. This quadratic form defines a geometrically reducible conic over $v$, so we can apply a linear change of variables $x_{i}$ (with coefficients in $k$) and obtain the following (in)equalities: $A_{1, 1}(v)=a\neq 0, A_{2, 2}=b\neq 0$ and all other $A_{i,j}$ vanish at the point $v$. Now we apply the Lagrangian method to put our quadratic form into its canonical form (if necessary we can reduce $U$), so $W$ is a subvariety of $\mathbb{P}^{2}\times U$ defined by a quadratic form $$Q_{U}'(x_{1}, x_{2}, x_{3})=Ax_{1}^{2}+Bx_{2}^{2}+Cx_{3}^{2},$$ where $A, B, C$ are elements of $k[U]$ such that $A(v)=a\neq0, B(V)=b\neq0$. But we know that $\operatorname{det}Q_{U}'$ defines the divisor $\Gamma$ in $U$, thus $C=stC'$ where $C'(v)=1$. Now we notice that in a formal neighborhood we can take a square root of every power series such that its free term is 1. Now the first statement of the lemma is an easy consequence of the formula (\ref{1}) which defines $W$ locally.

The last statement is local on the base. Let us consider a neighborhood of the point $v$ which doesnot contain other singular points of $\Gamma$. Then the Mori cone $\operatorname{NS}(\widetilde{W}/V)$ is two-dimensional and we have two extremal rays $R_{1}$ and $R_{2}$ of the cone. The ray $R_{1}$ is generated by a curve on the exceptional divisor $E$ of the morphism $\pi$ and has a negative intersection number with $K_{\widetilde{W}}$ while the second ray $R_{2}$ is generated by the strict transform $\pi^{-1}(l)$ of the fiber of the morphism $W\to V$ over the point $v$. The ray $R_{2}$ is $K_{\widetilde{W}}$-trivial: $$\pi^{-1}_{*}(l)\cdot K_{\widetilde{W}}=\pi^{-1}_{*}(l)\cdot (\pi^{*}(K_{W})+E)=l\cdot K_{W}+\pi^{-1}_{*}(l)\cdot E=-2+2=0.$$ The last equality is true because the curve $l$ is geometrically reducible and both of its components over the algebraic closure of $k$ have a transversal intersection with $E$ in the single point. Thus we have the numerical effectiveness of the anticanonical divisor $-K_{\widetilde{W}}$. The divisor$-K_{\widetilde{W}}$ is ample on the generic fiber, thus it is big. \hfill$\Box$
\end{proof}
\begin{llemma}\label{le8} Let $h: W\to V$ be an embedded conic fibration ($V$ is not necessary projective) such that its discriminant divisor $\Gamma$ is reduced and consists of two smooth irreducible components with transversal intersection at the single point $v$, and let $w$ be a singular point of $W$. Then there exists the following commutative diagram defining a Sarkisov link:
$$
\xymatrix{
W\ar[d]^{h} & \widetilde{W} \ar[l] \ar@{-->}[r] & \widehat{W}\ar[d]^{\widehat{h}}\\
V & & \widehat{V}\ar[ll]
}
$$
where the morphism $\widetilde{W}\to W$ is the blow-up of $w$, the morphism $\widehat{V}\to V$ is the blow-up of $v$, the map $\widetilde{W}\dasharrow \widehat{W}$ is an isomorphism in codimension 1 and $\widehat{W}\to \widehat{V}$ is a flat morphism between smooth varieties.
\end{llemma}
\begin{proof}
Let $v\in H$ be a hyperplane section of $V$. Let  us denote by $H_{n}$ the sum $nh^{*}(H)-K_{W}$ where $n\gg 0$, this divisor is ample for sufficiently big $n$, because $-K_{W}$ is relatively ample and $H$ is ample on the base. There exists a number $l$ such that $lH_{n}$ is very ample. Let us denote by $\mathcal{H}$ a big enough finite-dimensional linear subsystem of the full linear system $|lH_{n}|$ such that every element of $\mathcal{H}$ contains $w$, and let $c$ be a canonical threshold of the pair $(W, \mathcal{H})$. Singularities of the pair $(W, \mathcal{H})$ can be resolved by the single blow-up $p: \widetilde{W}\to W$ by Lemma ~\ref{le7} and by the choice of the linear system. Let $E$ be the exceptional divisor of the blow-up.

Let us consider the relative Mori cone $\operatorname{NS}(\widetilde{W}/V)$. As in Lemma ~\ref{le7} it is generated by two rays, the ray $R_{1}$ is generated by a curve on the exceptional divisor and has a zero intersection number with $p^{*}(K_{W}+c\mathcal{H})$ by the definition of the canonical threshold, the ray $R_{2}$ is generated by the strict transform of the fiber $h^{-1}(v)$, $R_{2}$ is $p^{*}(K_{W}+c\mathcal{H})$-negative. Applying the minimal model program in the category $\mathbb{Q}LSc$ to the pair $(\widetilde{W}, c\mathcal{H})$ we get a log-flip $\widetilde{W}\dasharrow \widehat{W}$ on the first step, let $\widehat{\mathcal{H}}$ be the image of the linear system $p^{-1}_{*}\mathcal{H}$. By Lemma ~\ref{le7} this log-flip is actually a flop, so $\widehat{W}$ is smooth (cf. ~\cite[Theorem 2.4]{16}). 

On the second step of the minimal model program we get a $K_{\widehat{W}}$-negative Mori contraction $\widehat{W}\to \widehat{V}$ (the Mori cone $\operatorname{NS}(\widehat{W}/V)$ has a unique $K_{\widehat{W}}+\widehat{\mathcal{H}}$-negative ray, and actually this ray is also $K_{\widehat{W}}$-negative). This contraction cannot be divisorial by the same reason as in the proof of Lemma ~\ref{le3}. Also this contraction can not be small because for smooth threefolds there are no $K$-negative small extremal contractions (cf. ~\cite[\S 2.1]{16}). So we get a non-singular Mori fibration over two-dimensional base which is also non-singular, cf. ~\cite[Theorem 3.5]{14}. The corresponding morphism between smooth surfaces $\widehat{V}\to V$ has the single exceptional divisor which maps to the point $v$. So we obtain the required diagram. Such diagram is unique because two Mori fibrations over the same base which are isomorphic in codimension 1 are actually isomorphic since they both are isomorphic to $\operatorname{Proj}(\bigoplus \mathcal{O}(-nK_{\widehat{W}}))$. \hfill$\Box$
\end{proof}

Applying Lemma ~\ref{le8} in a neighbourhood of every singular point of $\widehat{X}$ we get a regular conic fibration $\widehat{X}'\to \widehat{Y}$, which has a structure of $G$-conic fibration because the construction in Lemma ~\ref{le8} is canonical. Also it is easy to see, that the rank of the relative invariant Picard group is equal to 1, so  $\widehat{X}'\to \widehat{Y}$ is a standard $G$-conic fibration which is equivalent to the original $G$-fibration by rational curves $X\to Y$. \hfill$\Box$
\end{proof}

% % 
% \bibliography{my_ref,specific/inv_groups}
% \bibliographystyle{alpha}
% % %\bibliographystyle{gost71u}%{alpha}

\def\cprime{$'$} \def\polhk#1{\setbox0=\hbox{#1}{\ooalign{\hidewidth
 \lower1.5ex\hbox{`}\hidewidth\crcr\unhbox0}}}

\end{document}